\begin{document}
\theoremstyle{plain}
\newtheorem{theorem}{Theorem}
\newtheorem{corollary}[theorem]{Corollary}
\newtheorem{lemma}[theorem]{Lemma}
\newtheorem{proposition}[theorem]{Proposition}
\newtheorem{identity}[theorem]{Identity}

\theoremstyle{definition}
\newtheorem{definition}[theorem]{Definition}
\newtheorem{example}[theorem]{Example}
\newtheorem{conjecture}[theorem]{Conjecture}

\theoremstyle{remark}
\newtheorem{remark}[theorem]{Remark}
\newtheorem{note}[theorem]{Note}
\title{Some Formulas for Numbers
   of  Restricted Words}
\author{
Milan Janji\'c}
\address{\textsuperscript{1}Department of Mathematics and Informatics. University of Banja Luka. Republic of Srpska, BA
.}
\email{agnus@blic.net}
\date{\today}
\begin{abstract}
We define a quantity $c_m(n,k)$ as a generalization of the notion of the composition of the positive integer $n$ into $k$ parts. We proceed to derive  some known  properties of this quantity. In particular, we relate two partial Bell polynomials, in which the sequence of the variables of one polynomial is the invert transform of the sequence of the variables of the other.

We connect the quantities $c_m(n,k)$ and $c_{m-1}(n,k)$ via Pascal matrices. We then  relate $c_m(n,k)$ with the numbers of some restricted words over a finite alphabet.
 We develop a method which transfers some properties of restricted words over an alphabet of $N$ letters to  the restricted words over an alphabet of  $N+1$ letters.
  Several examples illustrate our findings.

Note that all our results depend solely on the initial arithmetic function $f_0$.
\end{abstract}
\maketitle
\section{Introduction}
 In this paper, the author continues the investigation of the properties of restricted words, introduced in two previous papers.
 In  the author's paper~\cite{ja1}, for a given initial arithmetic function $f_0$, we investigated functions  $f_1,f_2,\ldots$ such that $f_m$ is the $m$th inverse transform of $f_0$.  This paper, and Birmajer et al.~\cite{bir1}, consider some cases in which $f_m$ counts the number of restricted words over a finite alphabet.
 In the present paper, we considers the function $c_m(n,k)$ which, in a natural way,  generalizes the notion of the composition of $n$ into $k$ parts. Similar functions have recently  been considered by several authors, e.g., Eger~\cite{eg}.

Let $f_0$ be an arithmetic function. For the positive integer $m$, we let $f_m$ denote the $m$th invert transform of $f_0$. For non-negative integers $n,k,(1\leq k\leq n)$, we
define $c_m(n,k)$ recursively in the following way:\[c_m(0,0)=1,\;c_m(n,0)=0,(n>1),\] and \begin{equation}\label{e2}c_m(n,k)=\sum_{i=1}^{n-k+1}f_{m-1}(i)c_m(n-i,k-1),(1\leq k\leq
n).\end{equation}
Note that throughout the  paper letters $m,n,k$ will have the meaning as in this definition of $c_m(n,k)$. The definition is a natural generalization of the recurrence for the number of  the standard compositions of $n$ into $k$ parts. Firstly, we derive some  basic properties of $c_m(n,k)$. As a consequence, we  get  a  relation between the partial Bell polynomial of variables $x_1,x_2,\ldots$ and the partial  Bell  polynomial of variables $y_1,y_2,\ldots$, when the second sequence of variables is the invert transform of the first. Using Birmajer et al.~\cite[Corollary 10]{bir1}, we  derive a formula connecting $c_m(n,k)$ with $c_{m-1}(n,k)$. The formula may simply  be written with the use of the lower triangular Pascal matrices.  We then  extend this result to obtain a relation between $c_m(n,k)$ and $c_1(n,k)$.

 For the particular case when $f_0(1)=1$, we develop a method which allows us to derive an interpretation of $c_m(n,k)$ in terms of restricted words, when we know the restricted words counted by $c_1(n,k)$.
 We finish the paper with a number of examples illustrating our results.
\section{Some basic properties of $c_m(n,k)$}

We start with a few simple facts.
For $n=k$, we have $c_m(n,n)=f_{m-1}(1)c_m(n-1,n-1)$. Continuing the same procedure, we obtain
\begin{equation}\label{eq2} c_m(n,n)=f_{m-1}(1)^n.\end{equation}
 For $k=1$, we have
$c_m(n,1)=\sum_{i=0}^nf_{m-1}(i)g(n-i,0)$, that is,
\begin{equation}\label{eq3}c_m(n,1)=f_{m-1}(n).\end{equation}

We next give a simple proof of the fact that $c_m(n,k)$ may be expressed in terms of the partial Bell  polynomials $B_{n,k}$.
\begin{proposition}\label{pr1} It is true that
\begin{equation}\label{bpp}c_m(n,k)=\frac{k!}{n!}B_{n,k}(1!f_{m-1}(1),2!f_{m-1}(2),\ldots).\end{equation}
\end{proposition}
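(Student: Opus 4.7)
My plan is to prove the identity by comparing ordinary generating functions in the variable $t$ (with $k$ fixed), using the fact that the substitution $x_i = i!\,f_{m-1}(i)$ converts the exponential generating function of the partial Bell polynomials into an ordinary one.

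First, I would recall the standard exponential generating function identity
\[
\sum_{n\ge k} B_{n,k}(x_1,x_2,\ldots)\,\frac{t^n}{n!} \;=\; \frac{1}{k!}\left(\sum_{i\ge 1} x_i\,\frac{t^i}{i!}\right)^{k}.
\]
Substituting $x_i = i!\,f_{m-1}(i)$ collapses the inner sum to $\sum_{i\ge 1} f_{m-1}(i)\,t^i$, so after multiplying through by $k!$ I obtain
\[
\sum_{n\ge k} \frac{k!}{n!}\,B_{n,k}\bigl(1!f_{m-1}(1),2!f_{m-1}(2),\ldots\bigr)\,t^n \;=\; \left(\sum_{i\ge 1} f_{m-1}(i)\,t^i\right)^{k}.
\]

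Next, I would establish the companion identity
\[
\sum_{n\ge k} c_m(n,k)\,t^n \;=\; \left(\sum_{i\ge 1} f_{m-1}(i)\,t^i\right)^{k}
\]
by induction on $k$. The base case $k=0$ follows from $c_m(0,0)=1$ and $c_m(n,0)=0$ for $n\ge 1$. For the inductive step, the recurrence \eqref{e2} expresses $c_m(\cdot,k)$ as the Cauchy convolution of the sequence $f_{m-1}(1),f_{m-1}(2),\ldots$ with $c_m(\cdot,k-1)$, so multiplying the inductive hypothesis by $\sum_{i\ge 1} f_{m-1}(i)\,t^i$ yields the claim. Matching coefficients of $t^n$ on both sides of the two displays then gives \eqref{bpp}.

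The main (and essentially only) obstacle is bookkeeping: making sure the lower summation limits and the cases $n=k$ and $k=0$ are handled correctly, since $B_{n,k}=0$ for $n<k$ and the convolution has no contribution from $i=0$ (reflected in $f_{m-1}(0)$ not appearing in \eqref{e2}). A direct inductive alternative would use the well-known recurrence $B_{n,k}=\sum_{i=1}^{n-k+1}\binom{n-1}{i-1}x_i B_{n-i,k-1}$; after plugging in $x_i=i!\,f_{m-1}(i)$ and simplifying $\binom{n-1}{i-1}\,i!/n! = 1/((i-1)!\,(n-i)!)\cdot(i-1)!\cdot\text{stuff}$, one recovers the recurrence \eqref{e2} for $c_m(n,k)$, but the generating-function route avoids these manipulations and makes the origin of the factorial weights $i!$ in the arguments of $B_{n,k}$ transparent.
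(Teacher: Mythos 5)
Your proof is correct, but it takes a genuinely different route from the paper. The paper works entirely at the level of recurrences: it rewrites (\ref{e2}) to show that $b(n,k)=\frac{n!}{k!}c_m(n,k)$ satisfies $kb(n,k)=\sum_{i=1}^{n-k+1}\binom{n}{i}\,i!f_{m-1}(i)\,b(n-i,k-1)$, which is exactly the defining recurrence of $B_{n,k}$ at $x_i=i!f_{m-1}(i)$ --- essentially the ``direct inductive alternative'' you sketch in your last paragraph, except that the paper's algebra is lighter than you anticipate (one insertion of $\binom{n}{i}=\frac{n!}{i!(n-i)!}$ does the job). Your generating-function route instead invokes the exponential generating function $\sum_{n\ge k}B_{n,k}\frac{t^n}{n!}=\frac1{k!}\bigl(\sum_{i\ge1}x_i\frac{t^i}{i!}\bigr)^k$ and checks that $\sum_n c_m(n,k)t^n=\bigl(\sum_i f_{m-1}(i)t^i\bigr)^k$; the convolution step is sound, since $c_m(n-i,k-1)$ vanishes for $i>n-k+1$, so the truncated sum in (\ref{e2}) really is the full Cauchy product coefficient. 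What your approach buys is that the intermediate identity $\sum_n c_m(n,k)t^n=\bigl(\sum_i f_{m-1}(i)t^i\bigr)^k$ is precisely the generating-function form of Equation (\ref{eger}), so you get the paper's next proposition for free and the factorial weights are explained conceptually; what the paper's approach buys is that it needs only the Bell recurrence rather than the EGF, and stays elementary. One small bookkeeping point: the paper's stated boundary condition reads $c_m(n,0)=0$ for $n>1$, but your base case (and the rest of the paper) requires $c_m(n,0)=0$ for all $n\ge1$; this is evidently a typo in the paper rather than a gap in your argument.
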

\begin{proof}We write Equation (\ref{e2}) in the following way:
 \[\frac{n!}{k!}c_m(n,k)=\frac{1}{k}\sum_{i=1}^{n-k+1}f_{m-1}(i)\frac{n!}{(k-1)!}c_m(n-i,k-1).\]
 It follows that
\[\frac{n!}{k!}c_m(n,k)=\frac{1}{k}\sum_{i=1}^{n-k+1}i!f_{m-1}(i){n\choose
i}\frac{(n-i)!}{(k-1)!}c_m(n-i,k-1).\]
Hence, the  quantity $b(n,k)=\frac{n!}{k!}c_m(n,k)$
satisfies the following recurrence:
\[kb(n,k)=\sum_{i=1}^{n-k+1}i!f_{m-1}(i){n\choose i}b(n-i,k-1).\]
Denoting $i!f_{m-1}(i)=x_i,(i=1,2,\ldots)$, we obtain the well-known recurrence for the partial Bell polynomials:
\[kB_{n,k}(x_1,x_2,\ldots)=\sum_{i=1}^{n-k+1}{n\choose i}x_iB_{n-i,k-1}(x_1,x_2,\ldots).\]
\end{proof}

We next  prove that $c_m(n,k)$ is a convolution of the sequence
$f_{m-1}(1),f_{m-1}(2),\ldots$.
\begin{proposition} We have
\begin{equation}\label{eger}
c_m(n,k)=\sum_{i_1+i_2+\cdots+i_k=n}f_{m-1}(i_1)f_{m-1}(i_2)\cdots f_{m-1}(i_k),
\end{equation}
where the sum is taken over positive $i_t$ for $t=1,2,\ldots,k$.
\end{proposition}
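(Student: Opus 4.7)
The plan is to prove the formula by induction on $k$, driven directly by the defining recurrence (\ref{e2}).

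For the base case $k=1$, the claimed sum has a single term (the unique ``composition'' of $n$ with one part is $n$ itself), which equals $f_{m-1}(n)$. This matches Equation (\ref{eq3}), so the identity holds.

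For the inductive step, I assume the formula for $k-1$ and substitute into (\ref{e2}):
\[
c_m(n,k)=\sum_{i=1}^{n-k+1}f_{m-1}(i)\,c_m(n-i,k-1)=\sum_{i=1}^{n-k+1}f_{m-1}(i)\sum_{\substack{j_1+\cdots+j_{k-1}=n-i\\ j_t\geq 1}}\prod_{t=1}^{k-1}f_{m-1}(j_t).
\]
Setting $i_1=i$ and $i_{t+1}=j_t$ for $t=1,\ldots,k-1$, the double sum becomes a single sum over all tuples of positive integers with $i_1+i_2+\cdots+i_k=n$. The one point to double-check is that the outer summation range $1\leq i\leq n-k+1$ exactly captures the values of $i_1$ for which the residual equation $i_2+\cdots+i_k=n-i_1$ admits positive solutions: this requires $n-i_1\geq k-1$, i.e., $i_1\leq n-k+1$, which matches. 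Hence the two sums agree and the induction closes.

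There is no real obstacle; the only thing to handle carefully is the bookkeeping of the summation ranges so that no compositions are over- or undercounted. As an alternative route, one could instead start from Proposition \ref{pr1} together with the classical explicit formula for $B_{n,k}$ in terms of multinomial coefficients, and observe that $\frac{k!}{j_1!j_2!\cdots}$ is precisely the number of compositions of $n$ into $k$ positive parts with $j_i$ parts equal to $i$; summing over such partitions reassembles the unordered Bell sum into the ordered convolution on the right of (\ref{eger}). I prefer the inductive argument since it is shorter and uses only the defining recurrence.
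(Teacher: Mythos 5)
Your proof is correct and is essentially the paper's own argument: both proceed by induction on $k$, splitting off one part of the composition (the paper peels off $i_k$, you peel off $i_1$) and matching the resulting double sum against the defining recurrence (\ref{e2}); your version is just slightly more explicit about the base case and the summation range.
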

\begin{proof} Since
\begin{gather*}\sum_{i_1+i_2+\cdots+i_k=n}f_{m-1}(i_1)f_{m-1}(i_2)\cdots f_{m-1}(i_k)\\=
\sum_{i_k=1}^{n-k+1}f_{m-1}(i_k)\sum_{i_1+i_2+\cdots+i_{k-1}=n-i_k}f_{m-1}(i_1)f_{m-1}(i_2)\cdots
f_{m-1}(i_{k-1}),\end{gather*}
the proof easily follows by induction on $k$.
\end{proof}
\begin{remark}
 Equation (\ref{eger}) links $c_m(n,k)$ and the weighted integer compositions defined  in Eger~\cite{eg}.
 Here, the extended binomial coefficients ${k\choose n}_f$ are defined to count the number of the so-called weighted compositions of $n$ into $k$ parts, where
$f$ is a weighted function. Extended binomial coefficients  and $c_m(n,k)$ are connected by
\[c_m(n,k)={k\choose n-k}_{f_{m-1}}.\]
  \end{remark}

We know that the number of all compositions of $n$ equals the sum of compositions of $n$ into $k$ parts. For the functions  $f_m(n)$ and $c_m(n,k)$, we prove the analogous result.
 \begin{proposition}\label{pr3} The following equation is true:
\[f_m(n)=\sum_{k=1}^nc_m(n,k).\]
\end{proposition}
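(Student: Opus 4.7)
The plan is to reduce the identity to a standard fact about the invert transform, using the convolution formula (\ref{eger}) just established.

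First, I would substitute (\ref{eger}) into the right-hand side to obtain
\[
\sum_{k=1}^{n} c_m(n,k)
=\sum_{k=1}^{n}\;\sum_{i_1+\cdots+i_k=n}f_{m-1}(i_1)\cdots f_{m-1}(i_k),
\]
where each $i_t$ is a positive integer. This is visibly the sum, over all ordinary compositions of $n$, of the product of $f_{m-1}$ evaluated at the parts. Passing to the ordinary generating function $F_{m-1}(x)=\sum_{n\ge 1}f_{m-1}(n)x^n$, the inner convolution sum for fixed $k$ is the coefficient of $x^n$ in $F_{m-1}(x)^k$, so after interchanging summations,
\[
\sum_{n\ge 1}\Bigl(\sum_{k=1}^{n}c_m(n,k)\Bigr)x^n
=\sum_{k\ge 1}F_{m-1}(x)^k=\frac{F_{m-1}(x)}{1-F_{m-1}(x)}.
\]
The right-hand side is, by definition, the ogf of the invert transform of $f_{m-1}$, i.e.\ of $f_m$. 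Equating coefficients of $x^n$ yields the desired formula.

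If a purely combinatorial/recursive proof is preferred (staying within the tools of the section), I would proceed by induction on $n$, splitting off the $k=1$ term via (\ref{eq3}) to get $f_{m-1}(n)$, and for $k\ge 2$ using the defining recurrence (\ref{e2}) together with an index swap $j=k-1$:
\[
\sum_{k=2}^{n}c_m(n,k)
=\sum_{i=1}^{n-1}f_{m-1}(i)\sum_{j=1}^{n-i}c_m(n-i,j)
=\sum_{i=1}^{n-1}f_{m-1}(i)f_m(n-i),
\]
where the last equality is the inductive hypothesis. The sum then collapses to $f_m(n)$ via the invert-transform recurrence $f_m(n)=f_{m-1}(n)+\sum_{i=1}^{n-1}f_{m-1}(i)f_m(n-i)$.

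The only real subtlety is making sure the index ranges line up when interchanging the $k$- and $i$-sums (equivalently, recognising every composition of $n$ exactly once on the right); everything else is formal. I would write up the generating-function version since it is one line, and remark that the invert-transform identity used is the defining property of $f_m$.
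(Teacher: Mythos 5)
Your proposal is correct, and it actually contains two proofs. The second (inductive) variant is essentially the paper's own argument: the paper also splits off the $k=1$ term to get $f_{m-1}(n)$ via (\ref{eq3}), expands the remaining sum with the recurrence (\ref{e2}), interchanges the $i$- and $k$-sums, applies the induction hypothesis to get $\sum_{i=1}^{n-1}f_{m-1}(i)f_m(n-i)$, and concludes by the invert-transform recurrence $f_m(n)=f_{m-1}(n)+\sum_{i=1}^{n-1}f_{m-1}(i)f_m(n-i)$. (Both you and the paper should really phrase the hypothesis as strong induction, since it is invoked at every $n-i$ rather than only at $n-1$; this is cosmetic.) Your preferred generating-function route is genuinely different from what the paper does: it bypasses the recurrence entirely by combining the convolution formula (\ref{eger}) with the identity $\sum_{k\ge 1}F_{m-1}(x)^k=F_{m-1}(x)/(1-F_{m-1}(x))$ and the ogf characterization of the invert transform. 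This is shorter and makes transparent \emph{why} the proposition holds (it is the statement that summing over all "generalized compositions" of $n$ by number of parts recovers the invert transform), at the cost of importing the generating-function definition of the invert transform rather than its recurrence; the paper's inductive proof stays entirely within the recursive framework of Section 2 and uses only \cite[Corollary 2]{ja1} for the base case. Either writeup would be acceptable; the index-range bookkeeping you flag as the only subtlety is handled correctly in your displayed computation.
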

\begin{proof}
We use induction on $n$. For $n=1$, we have $f_m(1)=c_m(1,1)$.
From Equation (\ref{eq2}), it follows that $c_m(1,1)=f_{m-1}(1)$. Equation $f_m(1)=f_{m-1}(1)$ follows from the author~\cite[Corollary 2]{ja1}.

Assume that the claim is true  for $n-1$. It follows that
\[\sum_{k=1}^nc_m(n,k)=\sum_{k=1}^{n}\sum_{i=1}^{n-k+1}f_{m-1}(i)c_m(n-i,k-1).\]
Changing the order of summation on the right-hand side  implies
\[\sum_{k=1}^nc_m(n,k)=\sum_{i=1}^{n}f_{m-1}(i)\sum_{k=1}^{n-i+1}c_m(n-i,k-1).\]
We thus obtain
\[\sum_{k=1}^nc_m(n,k)=f_{m-1}(n)+\sum_{i=1}^{n-1}f_{m-1}(i)\sum_{k=1}^{n-i+1}c_m(n-i,k-1).\]
In the second sum on the right-hand side,  the term obtained for $k=1$  equals
zero, which implies that
\[\sum_{k=1}^nc_m(n,k)=f_{m-1}(n)+\sum_{i=1}^{n-1}f_{m-1}(i)\sum_{k-1=1}^{n-i}c_m(n-i,k-1).\]
Using the induction hypothesis, we obtain
\[\sum_{k=1}^nc_m(n,k)=f_{m-1}(n)+\sum_{i=1}^{n-1}f_{m-1}(i)f_m(n-i),\]
which proves the assertion.
\end{proof}
\begin{remark} The statement of this proposition is obvious for the standard compositions. In our case, it depends on arbitrary initial arithmetic function $f_0$. So, we needed a formal proof.
\end{remark}
\section{A connection of  $c_m(n,k)$ and $c_{m-1}(n,k)$}
We may view the array $c_m(n,k),(1\leq k\leq n)$ as a lower triangular matrix $C_m(n)$ of order $n$, whose $(n,k)$ entry equals $c_m(n,k)$. We let $L_n$ denote the lower triangular Pascal matrix. Hence, the  $(n,k)$ entry of $L_n$ is ${n-1\choose k-1},(1\leq k\leq n)$.

First, we prove the following:
\begin{proposition} For each $m>1$, we have
\[C_m(n)=C_{m-1}(n)\cdot L_n.\]
\end{proposition}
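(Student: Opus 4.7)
The matrix identity is equivalent, entry by entry, to the scalar identity
\begin{equation*}
c_m(n,k) \;=\; \sum_{j=k}^{n} \binom{j-1}{k-1}\, c_{m-1}(n,j), \qquad 1 \le k \le n,
\end{equation*}
since the $(j,k)$ entry of $L_n$ is $\binom{j-1}{k-1}$, so that $(C_{m-1}(n)\cdot L_n)_{n,k} = \sum_{j}c_{m-1}(n,j)\binom{j-1}{k-1}$. The plan is to prove this scalar identity directly by combining the convolution form of Equation~(\ref{eger}) with the invert-transform identity $f_{m-1}(l)=\sum_{t=1}^{l}c_{m-1}(l,t)$, which is Proposition~\ref{pr3} shifted by one.

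Starting from $c_m(n,k)=\sum_{l_1+\cdots+l_k=n}f_{m-1}(l_1)\cdots f_{m-1}(l_k)$, I would substitute into each factor the expansion $f_{m-1}(l_s)=\sum_{t_s\ge 1}\sum_{a^{(s)}_1+\cdots+a^{(s)}_{t_s}=l_s}f_{m-2}(a^{(s)}_1)\cdots f_{m-2}(a^{(s)}_{t_s})$, obtained by feeding Proposition~\ref{pr3} into Equation~(\ref{eger}). After interchanging summations, every surviving term corresponds to a composition $n=b_1+\cdots+b_j$ into $j\ge k$ positive parts, grouped into $k$ consecutive nonempty blocks of sizes $(t_1,\ldots,t_k)$ with $t_1+\cdots+t_k=j$. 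For fixed $j$, summing over the $b_i$ reproduces $c_{m-1}(n,j)$ by another application of~(\ref{eger}), while the number of ordered $k$-tuples $(t_1,\ldots,t_k)$ of positive integers with sum $j$ is $\binom{j-1}{k-1}$ (dividers in the $j-1$ internal gaps). Summing over $j$ yields the desired identity.

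The main bookkeeping step is the regrouping argument: one must verify that the data of a $k$-tuple of compositions of $l_1,\ldots,l_k$ with $\sum l_s=n$ is the same as a composition of $n$ into $j=\sum t_s$ positive parts together with a choice of where to place $k-1$ dividers among its $j-1$ internal gaps. Once this bijection is in place, the Pascal entry $\binom{j-1}{k-1}$ emerges naturally and the rest is routine. A more compact but less explicit alternative, flagged in the introduction, is to apply Proposition~\ref{pr1} to both sides and invoke Birmajer et al.~\cite[Corollary~10]{bir1}, which furnishes precisely the identity between partial Bell polynomials whose variable sequences are linked by the invert transform.
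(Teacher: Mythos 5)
Your proof is correct, but it follows a genuinely different route from the one in the paper. You prove the entrywise identity (\ref{cmk}) directly: expand $c_m(n,k)$ as the convolution (\ref{eger}) in $f_{m-1}$, replace each factor $f_{m-1}(l_s)$ by $\sum_{t_s}c_{m-1}(l_s,t_s)$ via Proposition \ref{pr3} (applied at level $m-1$, which is legitimate precisely because $m>1$), expand once more by (\ref{eger}), and then observe that the resulting data --- a composition of $n$ into $j$ positive parts together with a grouping into $k$ consecutive nonempty blocks --- is counted with multiplicity $\binom{j-1}{k-1}$ for each composition, the weight $\prod f_{m-2}(b_i)$ being preserved under the regrouping. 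This block-decomposition bijection is the only nontrivial step and it is sound; as a sanity check, for $k=n$ it reduces to $f_{m-1}(1)=f_{m-2}(1)$, which your use of Proposition \ref{pr3} delivers automatically. The paper instead imports Corollary 10 of Birmajer et al., which expresses a convolution of $f_m$ over \emph{nonnegative} parts in terms of $c_{m-1}(n,i)$, converts it into the triangular Pascal system $\sum_t\binom{k}{t}c_m(n,t)=a_k$, inverts that system to get (\ref{b2}), and then identifies the bracketed alternating sum with $\binom{i-1}{k-1}$ by specializing $f_0$ to a case where both sides are known --- yielding the binomial identity (\ref{id}) as a byproduct. Your argument is more self-contained (it uses only results already proved in the paper, needs no matrix inversion, and the Pascal entry appears for a transparent combinatorial reason), whereas the paper's argument trades that transparency for the bonus identity (\ref{id}) and a reusable link to the Birmajer--Gil--Weiner machinery. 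The alternative you flag at the end (passing through Proposition \ref{pr1} and partial Bell polynomials) is essentially the paper's route; you did not need it.
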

\begin{proof}
 It is easy to see that the statement  is equivalent to the following equation:
\begin{equation}\label{cmk}c_m(n,k)=\sum_{i=k}^n{i-1\choose k-1}c_{m-1}(n,i).\end{equation}

In our terminology, Birmajer et al.~\cite[Corollary 10]{bir2} may be written in the form
\[\sum_{j_1+j_2+\cdots+j_k=n}f_m(j_1)\cdots f_m(j_k)=\sum_{i=1}^n{i+k-1\choose
i}c_{m-1}(n,i),\]
where the sum is taken over nonnegative $j_1,\ldots,j_k$. Since at most $k-1$ of $j_t$ may equal $0$, Equation (\ref{eger}) yields
\[\sum_{j=0}^{k-1}{k\choose j}c_m(n,k-j)=\sum_{i=1}^n{i+k-1\choose
i}c_{m-1}(n,i).\]
Replacing $k-j$ by $t$ and denoting $\sum_{i=1}^n{i+k-1\choose
i}c_{m-1}(n,i)=a_k$ implies
\[\sum_{t=1}^{k}{k\choose t}c_m(n,t)=a_k,(k=1,2,\ldots,n).\]

Denoting $X=(c_m(n,1),c_m(n,2),\ldots,c_m(n,n))^T$, and
$A=(a_1,a_2,\ldots,a_n)^T$, this system may be written in the matrix form
\[Q\cdot X=A,\] where $Q$ is obtained from the Pascal matrix $L_{n+1}$ by omitting the first row and the first column.
It follows that $X=Q^{-1}\cdot A$, where
$Q^{-1}=\left((-1)^{i+j}{i\choose j}\right)_{n\times n}.$
For $k=1,2,\ldots,n$, we obtain
\begin{equation}\label{b2} c_m(n,k)=\sum_{i=1}^n\left[\sum_{j=1}^n(-1)^{j+k}{k\choose
j}{i+j-1\choose i}\right]c_{m-1}(n,i). \end{equation}

Formula  (\ref{b2}) holds for each $m> 1$ and for an arbitrary arithmetic function $f_0$. In particular,taking  $f_0(1)=1,f_0(i)=0,(i>1)$, we obviously have  $c_1(n,n)=1$, and $c_1(n,k)=0$ for $k<n$. Also
$f_1(n)=1$ for all $n$.
In this case, $c_2(n,k)$ equals the number of compositions of $n$ into $k$ parts, that is,
$c_2(n,k)={n-1\choose k-1}$.
Therefore, Equation (\ref{b2}) becomes
\begin{equation}\label{id}{n-1\choose k-1}=\sum_{j=1}^k(-1)^{j+k}{k\choose j}{n+j-1\choose n}.\end{equation}
 The expression in the square brackets in  Equation (\ref{b2}) equals ${i-1\choose k-1}$, which proves that Equation (\ref{e3}) is true.
\end{proof}
\begin{remark} Note that, as a byproduct, we proved the binomial identity (\ref{id}).
\end{remark}
\begin{remark}
Replacing $i-k$ by $t$ in Equation (\ref{cmk}), we obtain
\begin{equation}\label{ekf}c_m(n,k)=\sum_{t=0}^{n-k}{k+t-1\choose
t}c_{m-1}(n,k+t).\end{equation}
\end{remark}

From the equation $C_m(n,k)=C_{m-1}(n,k)\cdot L_n$ follows
\[C_m(n,k)=C_{m-1}(n,k)\cdot L_n=C_{m-2}(n,k)\cdot L_n^2=\cdots=C_1(n)\cdot L_n^{m-1}.\]
We thus obtain
\begin{proposition}\label{vv} The following matrix equation holds \[C_m(n)=C_1(n)L_n^{m-1},\]
or, explicitly,
\begin{equation}\label{e3}c_m(n,k)=\sum_{i=k}^n(m-1)^{i-k}{i-1\choose k-1}c_1(n,i),\;(1\leq k\leq n).\end{equation}
\end{proposition}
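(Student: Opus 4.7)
The plan is to obtain the matrix identity $C_m(n)=C_1(n)L_n^{m-1}$ by iterating the preceding proposition, and then translate it into the explicit entry-wise formula (\ref{e3}) by computing the entries of the matrix power $L_n^{m-1}$.

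First, I would note that the previous proposition gives $C_m(n)=C_{m-1}(n)\cdot L_n$ for every $m>1$. Applying this repeatedly we get
\[C_m(n)=C_{m-1}(n)\cdot L_n=C_{m-2}(n)\cdot L_n^2=\cdots=C_1(n)\cdot L_n^{m-1},\]
which is the matrix form of the claim. (A short formal induction on $m\geq 1$ makes this rigorous, with the base case $m=1$ being $L_n^0=I$.)

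Next, I would compute the $(i,k)$ entry of $L_n^{r}$ and show that it equals $r^{i-k}\binom{i-1}{k-1}$. The cleanest approach is induction on $r$. The base case $r=1$ is the definition of $L_n$. For the inductive step, assuming the formula for $r$, the $(i,k)$ entry of $L_n^{r+1}=L_n^r\cdot L_n$ equals
\[\sum_{j=k}^{i} r^{i-j}\binom{i-1}{j-1}\binom{j-1}{k-1}.\]
Using $\binom{i-1}{j-1}\binom{j-1}{k-1}=\binom{i-1}{k-1}\binom{i-k}{j-k}$ and the binomial theorem in the variable $j-k$, this simplifies to $(r+1)^{i-k}\binom{i-1}{k-1}$, giving the induction step.

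Finally, the matrix equation $C_m(n)=C_1(n)L_n^{m-1}$ read at entry $(n,k)$ gives
\[c_m(n,k)=\sum_{i=k}^n c_1(n,i)\cdot\bigl[L_n^{m-1}\bigr]_{i,k}=\sum_{i=k}^n (m-1)^{i-k}\binom{i-1}{k-1}c_1(n,i),\]
which is (\ref{e3}). The main obstacle is the combinatorial identity establishing the entries of $L_n^{m-1}$; once the Vandermonde-style trinomial revision $\binom{i-1}{j-1}\binom{j-1}{k-1}=\binom{i-1}{k-1}\binom{i-k}{j-k}$ is applied, the rest collapses by the binomial theorem, so no serious computational obstacle remains.
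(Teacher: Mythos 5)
Your proposal is correct and follows essentially the same route as the paper: iterate the preceding proposition $C_m(n)=C_{m-1}(n)\cdot L_n$ to obtain $C_m(n)=C_1(n)L_n^{m-1}$, then read off Equation (\ref{e3}) from the entries of the Pascal matrix power. The only difference is that you explicitly verify $\bigl[L_n^{r}\bigr]_{i,k}=r^{i-k}\binom{i-1}{k-1}$ by induction (correctly, via trinomial revision and the binomial theorem), whereas the paper simply cites this formula as well known.
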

\begin{proof} Since entries of $L_n^{m-1}$ are well known, we easily obtain Equation (\ref{e3}).
\end{proof}
Now, we  derive a formula in which $f_m(n)$ is expressed in terms of $c_1(n,k)$.
\begin{proposition}
The following formula holds
\begin{equation}\label{fmn1}f_m(n)=\sum_{i=1}^n m^{i-1}c_1(n,i).\end{equation}
\end{proposition}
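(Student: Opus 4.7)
The plan is to combine the two ingredients already established in the previous section. Proposition~\ref{pr3} expresses $f_m(n)$ as the row sum $\sum_{k=1}^n c_m(n,k)$, while Proposition~\ref{vv} expands each $c_m(n,k)$ in terms of the $c_1(n,i)$ with a coefficient involving powers of $m-1$ and a binomial coefficient. Substituting the latter into the former and swapping the order of summation should collapse everything after a binomial-theorem step.

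Concretely, I would first write
\[
f_m(n)=\sum_{k=1}^n c_m(n,k)=\sum_{k=1}^n\sum_{i=k}^n (m-1)^{i-k}\binom{i-1}{k-1}c_1(n,i),
\]
and then interchange the sums to pull $c_1(n,i)$ out:
\[
f_m(n)=\sum_{i=1}^n c_1(n,i)\sum_{k=1}^i (m-1)^{i-k}\binom{i-1}{k-1}.
\]
The inner sum, after the index shift $j=k-1$, becomes $\sum_{j=0}^{i-1}\binom{i-1}{j}(m-1)^{i-1-j}$, which by the binomial theorem equals $(1+(m-1))^{i-1}=m^{i-1}$. This yields Equation~(\ref{fmn1}).

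The only step that requires any care is the swap of summation and the recognition of the binomial expansion; both are routine, so there is no substantive obstacle. One should also check the edge case $m=1$, where the expression $(m-1)^{i-k}$ in Equation~(\ref{e3}) contributes only when $i=k$; this is consistent with $m^{i-1}=1$ only collapsing to $c_1(n,i)$ summed once, which is exactly $f_1(n)$ by Proposition~\ref{pr3}.
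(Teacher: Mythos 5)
Your proof is correct and follows exactly the paper's own argument: substitute Equation~(\ref{e3}) into Proposition~\ref{pr3}, interchange the order of summation, and evaluate the inner sum by the binomial theorem to get $m^{i-1}$. The extra verification of the $m=1$ case is a harmless bonus; nothing further is needed.
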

\begin{proof}
Proposition \ref{pr3} yields
\[f_m(n)=\sum_{k=1}^n\sum_{i=k}^n(m-1)^{i-k}{i-1\choose k-1}c_1(n,i).\]
Changing the order of summation gives
\[f_m(n)=\sum_{i=1}^n\left[\sum_{k=1}^i(m-1)^{i-k}{i-1\choose k-1}\right]c_1(n,i).\]
Using the binomial theorem, we obtain Equation (\ref{fmn1}).
\end{proof}
\begin{note}
Equation (\ref{fmn1}) appears in~Birmajer et al.~\cite{bir2}, where it is obtained using the properties of the partial Bell polynomials.
\end{note}
As an immediate consequence of Proposition \ref{pr1} and Equation (\ref{e3}), we obtain the following identity for the partial Bell polynomials.
\begin{identity}
If the sequence $y_1,y_2,\ldots$ is the invert transform of the sequence $x_1,x_2,\ldots$,
then
\[k!B_{n,k}(y_1,2!\cdot y_2,3!\cdot y_3,\ldots)=\sum_{i=k}^n{i-1\choose
k-1}i!B_{n,i}( x_1,2!\cdot x_2,3!\cdot x_3, \ldots).\]
\end{identity}

The following simple result connects the number of some compositions of $n$ into $k$ parts
with the number of restricted binary words of length $n-1$ with $k-1$ ones. The restriction, which we denote by $\mathcal R$,  is given by the kind of compositions.
\begin{corollary}There is a bijection between compositions of $n$ into $k$ parts and $\mathcal R$-restricted binary words of length $n-1$ with $k-1$ ones.
\end{corollary}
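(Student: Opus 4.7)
The plan is to produce the classical stars-and-bars bijection and then observe that whatever restriction one imposes on the compositions transfers to a (uniquely determined) restriction $\mathcal R$ on binary words, and vice-versa. In one direction, given an (unrestricted) composition $(i_1,i_2,\ldots,i_k)$ of $n$ into $k$ positive parts, I would associate the binary word
\[
w \;=\; 0^{i_1-1}\,1\,0^{i_2-1}\,1\,\cdots\,1\,0^{i_k-1},
\]
where $0^{r}$ denotes the string of $r$ zeros. A direct count shows that $w$ has length $(i_1-1)+(i_2-1)+\cdots+(i_k-1)+(k-1)=n-1$ and contains exactly $k-1$ ones, so $w$ lies in the intended ambient set.

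Next I would exhibit the inverse map: given a binary word $w$ of length $n-1$ with exactly $k-1$ ones, located at positions $1\le p_1<p_2<\cdots<p_{k-1}\le n-1$, set $p_0=0$ and $p_k=n$, and define the composition $(p_1-p_0,\,p_2-p_1,\ldots,p_k-p_{k-1})$. Each part is a positive integer, and the parts sum telescopically to $n$; a short check confirms that these two constructions are mutually inverse, establishing the bijection in the unrestricted case.

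Finally, I would transfer the restriction. If a class of compositions is described by any property $\Pi$ on the multiset of allowed parts (or on tuples of parts), then the image class of binary words is precisely those $w$ whose gap sequence $(p_1-p_0,\ldots,p_k-p_{k-1})$ satisfies $\Pi$; we take this induced property as the definition of $\mathcal R$. Since the forward and inverse maps respect the gap sequence literally, restricting the domain to compositions with property $\Pi$ restricts the codomain to $\mathcal R$-words, and the same bijection works verbatim.

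The only potentially delicate point is the framing rather than the computation: the restriction $\mathcal R$ in the statement is not specified a priori, so the content of the corollary is that for every restriction on compositions there is a canonical corresponding restriction on binary words (namely, the one expressed through gap lengths) under which the stars-and-bars bijection remains a bijection. Once this is said cleanly, the proof is essentially the length and one-count verifications above.
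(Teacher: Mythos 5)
Your bijection is correct and coincides with the paper's: the paper maps each part $i$ to $1\,0^{i-1}$, concatenates, and drops the leading $1$, which yields exactly your word $0^{i_1-1}1\,0^{i_2-1}1\cdots 1\,0^{i_k-1}$. Your explicit inverse and the remark that $\mathcal R$ is by definition the restriction induced on gap lengths merely spell out what the paper leaves implicit.
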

\begin{proof}
The bijection is given by the following correspondence:
\begin{equation}\label{kor}1\to 1,2\to 10,3\to 100,\ldots.\end{equation}
In this way, the compositions of $n$ into $k$ parts designate  $\mathcal R$-restricted binary word of length $n$ and  having $k$ ones, all of which begin with $1$. The converse  is also true. Omitting the leading $1$, we obtain the desired correspondence.
\end{proof}

We next extend the above result.
\begin{proposition}\label{hr}
Take  $m>1$. Assume that $f_{0}(1)=1$ and that $f_{m-1}(n)$ counts the $\mathcal R$-restricted  words of length $n-1$ over a finite alphabet $\alpha$.
Let $x$ be a letter which is not in $\alpha$.
Then, $c_m(n,k)$ equals the number of $\mathcal R$-restricted words of length $n-1$ over the alphabet $\alpha\cup\{x\}$ in which $x$ appears $k-1$ times.
\end{proposition}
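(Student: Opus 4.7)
The plan is to prove the proposition by a direct combinatorial decomposition, using Equation (\ref{eger}) as the bridge between $c_m(n,k)$ and the word count.

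First I would fix the interpretation of $\mathcal{R}$ on the enlarged alphabet $\alpha\cup\{x\}$: a word over $\alpha\cup\{x\}$ is declared $\mathcal{R}$-restricted precisely when each of its maximal $x$-free factors is $\mathcal{R}$-restricted over $\alpha$. This is the natural convention implicit in the proposition, under which the new letter $x$ acts as an unrestricted separator.

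Next I would set up the bijective decomposition. Every word $w$ of length $n-1$ over $\alpha\cup\{x\}$ with exactly $k-1$ occurrences of $x$ admits a unique factorization $w=w_1\,x\,w_2\,x\,\cdots\,x\,w_k$, where each $w_t$ is a (possibly empty) word over $\alpha$. Setting $i_t=|w_t|+1$, we obtain positive integers with $i_1+\cdots+i_k=(n-1-(k-1))+k=n$. By hypothesis, the number of admissible choices for $w_t$ is $f_{m-1}(i_t)$; in particular $f_{m-1}(1)=1$ (inherited from $f_0(1)=1$ via the invert transform, as already used in the proof of Proposition~\ref{pr3}), which matches the fact that the empty word is the unique $\mathcal{R}$-restricted word of length $0$. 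Multiplying the choices and summing over all length profiles gives
\[
\sum_{i_1+\cdots+i_k=n} f_{m-1}(i_1)\cdots f_{m-1}(i_k),
\]
which equals $c_m(n,k)$ by Equation~(\ref{eger}).

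The main obstacle is really just pinning down the conventions: specifying how $\mathcal{R}$ extends to $\alpha\cup\{x\}$, and handling the boundary cases where $w_t$ is empty (i.e.\ $x$ appears at either end of $w$ or two $x$'s are adjacent), which is exactly where the identity $f_{m-1}(1)=1$ is needed. Once these are addressed, the proposition reduces to a single application of (\ref{eger}).
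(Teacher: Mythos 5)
Your proof is correct, and it takes a recognizably different route from the paper's. The paper proves the proposition by induction on $k$ directly from the recurrence (\ref{e2}): the summand $f_{m-1}(i)\,c_m(n-i,k-1)$ is interpreted as conditioning on the position of the first occurrence of $x$, with the prefix before that $x$ being an arbitrary $\mathcal R$-restricted word over $\alpha$ of length $i-1$. You instead decompose the whole word at once into its $k$ maximal $x$-free blocks and invoke the convolution formula (\ref{eger}); this is in effect the paper's induction fully unrolled, so the combinatorial content is the same, but your version is a one-shot bijection rather than an inductive argument. Two things your write-up does better: it makes explicit the convention that $\mathcal R$ on $\alpha\cup\{x\}$ means each maximal $x$-free factor is $\mathcal R$-restricted over $\alpha$ (the paper only gestures at this with ``the restriction $\mathcal R$ does not concern $x$,'' even though its own proof tacitly relies on the same factorization property), and it isolates exactly where $f_{m-1}(1)=1$, i.e.\ the hypothesis $f_0(1)=1$, enters --- namely to account for empty blocks when $x$'s are adjacent or at the ends. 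The paper's inductive phrasing buys a little less bookkeeping with empty blocks (only the leading block is handled separately at each step), but your argument is the cleaner and more transparent of the two.
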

\begin{proof}
Since $f_0(1)=1$, it follows from the author~\cite[Corollary 2]{ja1}  that $f_{m-1}(1)=1$. From this, one easily obtains that $f_{m-1}(n)>0$ for all $n$.

We use induction on $k$. For $k=1$, Equation (\ref{eq3}) yields $c_m(n,1)=f_{m-1}(n).$
Since $f_{m-1}(n)$ equals the number of $\mathcal R$-restricted words of length $n-1$ over $\alpha$, not containing $x$, we conclude that the statement is true for $k=1$.
 Assume that it is true for $k-1$. Consider the first term $f_{m-1}(1)c_m(n-1,k-1)=c_m(n-1,k-1)$ in Equation (\ref{e2}).  By the induction hypothesis, $c_{m}(n-1,k-1)$ equals the number of $\mathcal R$-restricted words of length $n-2$ having $k-2$ letters equal to $x$. Adding $x$ at the beginning of each such word, we obtain all $\mathcal R$-restricted words of length $n-1$ over $\alpha\cup\{x\}$,  having $k-1$ letters equal to $x$ and all of which begin with $x$.

Consider now the summand $f_{m-1}(i)\cdot c_m(n-i,k-1)$ in Equation (\ref{e2}).
 By the induction hypothesis, $c_m(n-i,k-1)$ equals the number of $\mathcal R$-restricted words of  length $n-i-1$ with $k-2$ letters equal to $x$. We first insert $x$ at the beginning of each such word.  In front of $x$, we insert an arbitrary word of length $i-1$ over $\alpha$, which are $f_{m-1}(i)$ in number. We thus obtain all words of length $n$ over $\alpha\cup\{x\}$, such that the first appearance of $x$ is in the $i$th  position.
Since the restriction $\mathcal R$ does not concern $x$, it follows that $x$ may be in an arbitrary place in a word. It implies that the right-hand side of Equation (\ref{e2}) counts all the desired words.
\end{proof}
\begin{remark}We stress the fact that the restriction $\mathcal R$ concerns only the letters from $\alpha$. Also, our result essentially depends on the fact that $f_0(1)=1$.
\end{remark}
Now, we illustrate our method by a simple example.
\begin{example}
Assume that $f_0(i)=1$ for $i=1,2,\ldots$. According to Equation (\ref{eq3}), we have $f_{m-1}(n)=m^{n-1}$. Hence $f_{m-1}(n)$ equals the number of words of length $n-1$ over the alphabet $\{0,1,\ldots,m-1\}$ with no restriction. Then, $c_1(n,k)={n-1\choose k-1}.$
This means that $C_1(n)=L_n$. It follows that  $C_m(n)=L_n^m.$
From the well known formula for the terms of $L_n^m$, we obtain
\begin{equation}\label{bu}c_m(n,k)=m^{n-k}{n-1\choose k-1}.\end{equation}
Equation (\ref{bu}) is in accordance with Proposition \ref{hr}. Namely, according to Proposition \ref{hr},  $c_m(n,k)$ equals the number of words of length $n-1$ over $\{0,1,\ldots,m\}$ with $k-1$ letters equal to $m$, and with no restrictions. These $k-1$ letters may be chosen in ${n-1\choose k-1}$ ways. The remaining letters may be arbitrary letters from $\{0,1,\ldots,m-1\}$, which are   $m^{n-k}$ in number.
As a byproduct, using Equation (\ref{e3}), we obtain the following binomial identity:
\begin{identity}
For $m>1$, we have
\begin{equation}\label{cp}m^{n-k}{n-1\choose k-1}=\sum_{j=0}^{n-k}(m-1)^{j}{n-1\choose k+j-1}{k+j-1\choose j}.\end{equation}
\end{identity}
This simple case is related with to Tchebychev polynomials $U_n(x)$ of the second kind.
\begin{corollary}
  The  expression $\vert[x^{n-k}](U_{n+k-2}(x))\vert$ equals the number of words of length $n-1$ over the alphabet $\{0,1,2\}$ having $k-1$ twos.
\end{corollary}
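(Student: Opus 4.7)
The plan is to compute the coefficient $[x^{n-k}]U_{n+k-2}(x)$ directly from the explicit expansion of the Chebyshev polynomial and then match it against Equation (\ref{bu}) at $m=2$.

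First, I would invoke the standard closed form
\[U_{N}(x)=\sum_{j=0}^{\lfloor N/2\rfloor}(-1)^j\binom{N-j}{j}(2x)^{N-2j}.\]
Setting $N=n+k-2$ and extracting the term whose exponent equals $n-k$ forces $N-2j=n-k$, i.e.\ $j=k-1$. One must check that this index is admissible, which amounts to $0\le k-1\le\lfloor(n+k-2)/2\rfloor$; the right inequality reduces to $k\le n$, so the substitution is legitimate throughout the stated range $1\le k\le n$. Consequently
\[[x^{n-k}]U_{n+k-2}(x)=(-1)^{k-1}\,2^{n-k}\binom{n-1}{k-1},\]
and taking absolute values yields $2^{n-k}\binom{n-1}{k-1}$.

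Second, specializing Equation (\ref{bu}) to $m=2$ gives $c_2(n,k)=2^{n-k}\binom{n-1}{k-1}$. By the discussion immediately following Equation (\ref{bu}) (which is itself an application of Proposition \ref{hr} with $m=2$), this number counts exactly the words of length $n-1$ over $\{0,1,2\}$ having $k-1$ letters equal to $2$. Comparing the two expressions completes the argument.

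No step is particularly delicate; the only real obstacle is the careful bookkeeping of the Chebyshev expansion, namely verifying that the unique term contributing to $x^{n-k}$ corresponds to $j=k-1$ and that the overall sign $(-1)^{k-1}$ is washed out by the absolute value. Everything else is a direct specialization of results already established in the example.
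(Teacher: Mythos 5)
Your proof is correct and follows essentially the same route as the paper: identify the coefficient $[x^{n-k}]U_{n+k-2}(x)$ as $\pm 2^{n-k}\binom{n-1}{k-1}$ from the standard expansion of the Chebyshev polynomial, and match its absolute value with $c_2(n,k)=2^{n-k}\binom{n-1}{k-1}$, which Proposition \ref{hr} interprets as the stated word count. Your version is in fact slightly more careful, since you verify that $j=k-1$ is an admissible index and get the sign $(-1)^{k-1}$ right (the paper writes $(-1)^k$, a harmless slip under the absolute value); the paper additionally routes the conclusion through Identity (\ref{cp}) at $m=2$, which your direct use of Equation (\ref{bu}) makes unnecessary.
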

\begin{proof}
 It is a well-known fact that  $(-1)^k2^{n-k}{n-1\choose k-1}$ is the coefficient of $U_{n+k-2}(x)$ by $x^{n-k}$. We thus obtain
 \[[x^{n-k}](U_{n+k-2}(x))=(-1)^{n-k}\sum_{j=0}^{n-k}{n-1\choose k+j-1}{k+j-1\choose j}.\]
This is the case, when in Equation (\ref{cp}), we take $m=2$.
\end{proof}
\end{example}

\section{More examples}

Firstly,  we revise the result from the author~\cite[Corollary 9]{ja2}.
\begin{example}\label{ex1}
We define $f_0(1)=f_0(1)=1$, and $f_0(n)=0$ otherwise. According to the author~\cite[Corollary 33]{ja1}, we know that $f_{m-1}(n)$ equals the number of words of length $n-1$ over the alphabet $\{0,1,\ldots,m-1\}$ having all zeros isolated, which is the restriction $\mathcal R$.

\begin{corollary} The number $c_m(n,k)$ equals the number of words of length $n-1$ over the alphabet $\{0,1,\ldots,m\}$ which have $k-1$ letters equal to $m$ and all zeros are isolated.
Also,
\[c_1(n,k)={k\choose n-k},\] and
\[c_m(n,k)=\sum_{j=\lceil\frac{n}{2}\rceil-k}^{n-k}(m-1)^{j}{j+k-1\choose
k-1}{j+k\choose n-j-k},\left(m>1,\left\lceil\frac{n}{2}\right\rceil\geq k\right).\]
\end{corollary}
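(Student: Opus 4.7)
The plan is to establish the three assertions in sequence, each by appealing to a tool already built in the paper; I expect no genuine obstacle, only careful bookkeeping with summation bounds.

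For the combinatorial description, I would invoke Proposition~\ref{hr} with $\alpha = \{0,1,\ldots,m-1\}$ and new letter $x = m$. The hypothesis $f_0(1) = 1$ holds by construction, and by \cite[Corollary 33]{ja1} the function $f_{m-1}(n)$ already counts the $\mathcal R$-restricted words of length $n-1$ over $\alpha$, where $\mathcal R$ is ``all zeros isolated.'' Because $\mathcal R$ concerns only the letters of $\alpha$, Proposition~\ref{hr} immediately yields the claimed count of words of length $n-1$ over $\{0,1,\ldots,m\}$ having $k-1$ occurrences of $m$ and all zeros isolated.

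For the closed form $c_1(n,k) = \binom{k}{n-k}$, I would apply the convolution identity~\eqref{eger}. For the present $f_0$, the sum reduces to counting ordered $k$-tuples $(i_1,\ldots,i_k)$ with each $i_t \in \{1,2\}$ and $i_1 + \cdots + i_k = n$. Writing $s$ for the number of parts equal to $2$ forces $s = n-k$, so the number of such tuples equals $\binom{k}{n-k}$.

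For the explicit formula, I would substitute this expression for $c_1(n,i)$ into Proposition~\ref{vv}, equation~\eqref{e3}, and then make the change of variable $j = i-k$. This produces terms of the shape $(m-1)^j \binom{j+k-1}{k-1}\binom{j+k}{n-j-k}$ over $0 \leq j \leq n-k$. The factor $\binom{j+k}{n-j-k}$ vanishes whenever $n-j-k > j+k$, i.e.\ whenever $j < n/2 - k$, so the sum truncates to $j \geq \lceil n/2\rceil - k$. The hypothesis $\lceil n/2\rceil \geq k$ is precisely what guarantees that this lower bound is nonnegative and thus lies in the original range $[0,n-k]$ of the substitution, matching the expression stated in the corollary. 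The only step worth double-checking is this truncation of the index set; aside from that the derivation is mechanical.
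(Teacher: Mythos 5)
Your argument is correct and follows essentially the same route as the paper: Proposition~\ref{hr} for the combinatorial description, a closed form for $c_1(n,k)$, and then Equation~(\ref{e3}) together with the observation that $\binom{j+k}{n-j-k}$ vanishes for $j<\lceil n/2\rceil-k$. The only difference is that you derive $c_1(n,k)={k\choose n-k}$ directly from the convolution identity~(\ref{eger}) (counting compositions of $n$ into $k$ parts equal to $1$ or $2$) rather than citing the author's earlier paper, which makes the step self-contained.
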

\begin{proof}
The first statement follows from Proposition \ref{hr}.
From the author~\cite[Corollary 10]{ja2}, it follows that $c_1(n,k+j)={k+j\choose n-k-j},(j=0,\ldots,n-k).$ This implies that $k+j\geq n-k-j$, which yields $2j\geq n-2k$ and $n\geq 2k$.
The formula is true according  to Equation (\ref{e3}).
\end{proof}
%The  arrays \seqnum{A030528} and \seqnum{A154929}, in Sloane~\cite{slo} are related to  Example \ref{ex1}.
\end{example}
Next, we reexamine the result in the author~\cite[Corollary 28]{ja1}.
\begin{example}\label{ex2}
We define $f_0(n)=1$ when $n$ is odd, and $f_0(n)=0$ otherwise.
In this case, $f_m(n)$ equals the number of words of length $n-1$ over the alphabet $\{0,1,\ldots,m\}$, avoiding runs of zeros of odd lengths.
From the author~\cite[Proposition 24]{ja2}, it follows that
\[c_1(n,k)=\begin{cases}{\frac{n-k}{2}+k-1\choose k-1},&\text{ if $n-k$ is even};\\
0,&\text{ if $n$ is odd}.\end{cases}\]

The number $c_1(n,k)$ equals the number of  binary words of length $n-1$ with $k-1$ ones, avoiding runs of zeros of odd lengths. This follows from bijection (\ref{kor}).
 We add a short direct proof.
\begin{corollary} The number  $c_1(n,k)$ equals the number of
binary words of length $n-1$ with $k-1$ ones, avoiding runs of zeros of odd lengths.
\end{corollary}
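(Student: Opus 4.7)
The plan is to combine Equation (\ref{eger}) with the bijection (\ref{kor}) already established for compositions.

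First, I would interpret $c_1(n,k)$ combinatorially. By Equation (\ref{eger}),
\[c_1(n,k)=\sum_{i_1+i_2+\cdots+i_k=n}f_0(i_1)f_0(i_2)\cdots f_0(i_k),\]
and with the given $f_0$ (equal to $1$ on odd arguments, $0$ elsewhere), every nonzero summand contributes $1$ precisely for a composition of $n$ into $k$ odd parts. Hence $c_1(n,k)$ counts compositions of $n$ into $k$ odd parts.

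Next, I would apply the correspondence (\ref{kor}), sending a composition $(i_1,i_2,\ldots,i_k)$ of $n$ to the binary word
\[1\,\underbrace{00\cdots 0}_{i_1-1}\,1\,\underbrace{00\cdots 0}_{i_2-1}\cdots 1\,\underbrace{00\cdots 0}_{i_k-1}\]
of length $n$ containing $k$ ones, then stripping the leading $1$ to obtain a binary word $w$ of length $n-1$ containing $k-1$ ones. This is exactly the bijection used in the earlier corollary, so it is already a bijection between compositions of $n$ into $k$ parts and binary words of length $n-1$ with $k-1$ ones.

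The key observation is then that, in the word $w$, the maximal runs of zeros have lengths $i_1-1,i_2-1,\ldots,i_k-1$ (the first run being the leading zeros, the last being the trailing zeros, and the others lying between consecutive ones). Therefore every run of zeros in $w$ has even length if and only if each $i_j$ is odd, i.e., if and only if the original composition consists entirely of odd parts. Restricting the bijection to this subclass gives a bijection between compositions of $n$ into $k$ odd parts and binary words of length $n-1$ with $k-1$ ones whose zero-runs all have even length, which is the claimed assertion. The only point requiring a small amount of care is correctly accounting for the leading and trailing zero-runs after the leading $1$ is removed, and this is straightforward; I do not anticipate any genuine obstacle.
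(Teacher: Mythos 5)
Your argument is correct, but it is not the proof the paper actually writes out. You go through the composition interpretation: Equation (\ref{eger}) with $m=1$ identifies $c_1(n,k)$ with the number of compositions of $n$ into $k$ odd parts, and bijection (\ref{kor}) (after stripping the leading $1$) turns such a composition into a binary word of length $n-1$ with $k-1$ ones whose maximal zero-runs have lengths $i_1-1,\ldots,i_k-1$, all even precisely when all parts are odd. This is exactly the route the paper alludes to in the sentence preceding the corollary (``This follows from bijection (\ref{kor})'') but deliberately does not spell out; the ``short direct proof'' it gives instead is a pure enumeration of the words themselves: if $n-k$ is odd some zero-run must be odd, and if $n-k$ is even one arranges $\frac{n-k}{2}$ blocks $00$ among $k-1$ ones to get ${\frac{n-k}{2}+k-1\choose k-1}$ words, matching the closed formula for $c_1(n,k)$ quoted just above from the author's earlier work. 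The trade-off: your bijective argument works straight from the defining convolution and needs no closed formula, and it cleanly explains \emph{why} the two objects are equinumerous; the paper's count is shorter given that the formula ${\frac{n-k}{2}+k-1\choose k-1}$ is already on the table, and it doubles as an independent combinatorial verification of that formula. One small point you handled correctly but should keep explicit in a final write-up: when some $i_j=1$ the corresponding zero-run is empty, which imposes no parity constraint, consistent with $i_j-1=0$ being even, so the equivalence between ``all parts odd'' and ``all maximal zero-runs of even length'' survives these degenerate gaps.
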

 \begin{proof} Assume that  $n$ and $k$ are of different parities.
  Since a word of length $n-1$ with $k-1$ ones must have $n-k$ zeros, and since  $n-k$ is odd,  we conclude that such a word must have an odd run of zeros. It follows that $c_1(n,k)=0$.
 If $n$ and $k$ are of the same parity, then $n-k$ is even. This means that there are $\frac{n-k}{2}$ pairs of zeros. Of these $\frac{n-k}{2}$ pairs and $k-1$ ones, we may form ${\frac{n-k}{2}+k-1\choose k-1}$ words of length $n-1$ having $k-1$ ones and avoiding runs of zeros of odd lengths.
 \end{proof}
 Using induction and Proposition \ref{hr}, we obtain
 \begin{corollary} The number  $c_m(n,k)$ equals the number of words of length $n-1$ over $\{0,1,\ldots,m\}$ with $k-1$ letters equal to $m$, avoiding runs of zeros of odd lengths.
 \end{corollary}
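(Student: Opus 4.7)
The plan is to proceed by induction on $m$, with the preceding corollary serving as the base case $m=1$, and with Proposition \ref{hr} supplying the inductive step.

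First I would record the base case: the previous corollary states precisely that $c_1(n,k)$ counts binary words of length $n-1$ over $\{0,1\}$ with $k-1$ ones that avoid runs of zeros of odd length. This matches the claim for $m=1$.

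For the inductive step from $m-1$ to $m$, I would argue in two moves. By the inductive hypothesis, $c_{m-1}(n,k)$ counts $\mathcal R$-restricted words of length $n-1$ over $\{0,1,\ldots,m-1\}$ in which the letter $m-1$ appears $k-1$ times. Summing over $k$ and applying Proposition \ref{pr3} shows that $f_{m-1}(n)=\sum_k c_{m-1}(n,k)$ counts all $\mathcal R$-restricted words of length $n-1$ over $\{0,1,\ldots,m-1\}$ (consistent with the interpretation of $f_{m-1}$ recalled at the outset of Example \ref{ex2}). Now I would apply Proposition \ref{hr} with $\alpha=\{0,1,\ldots,m-1\}$ and fresh letter $x=m\notin\alpha$. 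Since $f_0(1)=1$ by hypothesis and $f_{m-1}$ has the required combinatorial interpretation, Proposition \ref{hr} immediately yields that $c_m(n,k)$ counts $\mathcal R$-restricted words of length $n-1$ over $\{0,1,\ldots,m\}$ with $k-1$ letters equal to $m$, which is the desired statement.

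The one point to verify carefully, rather than any real obstacle, is that the restriction $\mathcal R$ (no odd run of zeros) refers only to the letter $0$, which lies in $\alpha$; hence extending the alphabet by $m$ does not activate any new constraint involving the added letter, so Proposition \ref{hr} applies verbatim. Everything else is bookkeeping, and no further combinatorial computation is needed.
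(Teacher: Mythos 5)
Your proposal is correct and follows essentially the same route as the paper, which likewise obtains this corollary by induction on $m$ together with Proposition \ref{hr} applied to $\alpha=\{0,1,\ldots,m-1\}$ and $x=m$. The only cosmetic difference is that you derive the combinatorial meaning of $f_{m-1}(n)$ from the inductive hypothesis via Proposition \ref{pr3}, whereas the paper simply quotes that interpretation from the author's earlier work; both ways the hypothesis of Proposition \ref{hr} is met and the argument goes through.
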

From  Equation (\ref{e3}), we  obtain an explicit formula for $c_m(n,k)$.

%The  arrays \seqnum{A037027} and \seqnum{A054456} in Sloane~\cite{slo} are related to %Example \ref{ex2}.
 \end{example}
 \begin{example}\label{ex3}
We define $f_0(i)=i,(i=1,2,\ldots)$. The words of length $n-1$, avoiding $01$, are $\{11\ldots 1,11\cdots10,\ldots,00\ldots 0\}$. Hence, there are $n$ such words.
Applying Proposition \ref{hr} several times, we obtain
\begin{corollary}
 The number $c_m(n,k)$ equals the number of words of length $n-1$ over $\{0,1,\ldots,m+1\}$ having $k-1$ letters equal to $m+1$ and avoiding $01$.
\end{corollary}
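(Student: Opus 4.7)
The plan is to prove the claim by induction on $m \geq 1$, running it in parallel with the companion statement $(*)_m$: that $f_m(n)$ counts the words of length $n-1$ over $\{0,1,\ldots,m+1\}$ avoiding $01$. The two statements interlock: Proposition~\ref{hr} uses $(*)_{m-1}$ to produce the claim about $c_m(n,k)$, and Proposition~\ref{pr3} then sums $c_m(n,k)$ over $k$ to yield $(*)_m$.

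The base case $(*)_0$ is supplied by the opening observation of the example: the words of length $n-1$ over $\{0,1\}$ avoiding $01$ are exactly the strings $1^a 0^b$ with $a+b = n-1$, of which there are $n = f_0(n)$. For the inductive step I apply Proposition~\ref{hr} with $\alpha = \{0,1,\ldots,m\}$ and the adjoined letter $x = m+1$: the restriction ``avoid the substring $01$'' refers only to the letters $0$ and $1$, so it does not concern $x$, and the hypothesis $f_0(1)=1$ is plainly met. Combined with $(*)_{m-1}$, the proposition yields the desired interpretation of $c_m(n,k)$. Proposition~\ref{pr3} then gives $f_m(n) = \sum_{k=1}^n c_m(n,k)$; classifying the words of length $n-1$ over $\{0,1,\ldots,m+1\}$ avoiding $01$ by their number of occurrences of $m+1$ shows this sum is exactly $(*)_m$, advancing the induction.

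The only delicate point I anticipate is that Proposition~\ref{hr} is formally stated under the hypothesis $m>1$, whereas the induction invokes it already at $m=1$. I would either observe that its proof (an induction on $k$ resting only on $f_0(1)=1$, the positivity of $f_{m-1}$, and the word-theoretic meaning of $f_{m-1}$) carries through verbatim for $m=1$, or else handle that initial step by hand: writing $c_1(n,k) = \sum_{i_1+\cdots+i_k=n} i_1\cdots i_k$ via (\ref{eger}), reading each factor $i_j = f_0(i_j)$ as the $i_j$ strings $1^{a_j}0^{b_j}$ of length $i_j-1$, and concatenating these $k$ blocks separated by $k-1$ letters equal to $2$ gives the required bijection directly. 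Apart from this minor friction, the argument is a clean concatenation of previously established results.
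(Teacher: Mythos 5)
Your argument is correct and is essentially the paper's own, which compresses everything into the single phrase ``applying Proposition \ref{hr} several times'': the interleaved induction you describe (Proposition \ref{hr} to pass from the word interpretation of $f_{m-1}$ to that of $c_m(n,k)$, then Proposition \ref{pr3} to recover the interpretation of $f_m$ by summing over $k$) is exactly what that phrase must unpack to. You are also right to flag the $m>1$ hypothesis of Proposition \ref{hr}, which the paper silently ignores at the first step; either of your fixes (noting the proof carries over verbatim when $f_0$ itself has a word interpretation, or the direct bijection via Equation (\ref{eger})) closes that small gap.
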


\begin{corollary} The following formula holds
\[c_1(n,k)={n+k-1\choose 2k-1}.\]

Also,
\[c_m(n,k)=\sum_{i=k}^n(m-1)^{i-k}{i-1\choose k-1}{n+i-1\choose 2i-1}.\]
\end{corollary}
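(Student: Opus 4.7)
The plan is to handle the two claims in sequence, using the convolution formula (\ref{eger}) for the first one and the already-proved Equation (\ref{e3}) for the second.

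For the identity $c_1(n,k)=\binom{n+k-1}{2k-1}$, I would apply Equation (\ref{eger}) with $m=1$ and $f_0(i)=i$, which gives
\[
c_1(n,k)=\sum_{i_1+\cdots+i_k=n}i_1 i_2\cdots i_k\qquad(i_t\geq 1).
\]
The cleanest route is via generating functions: since $\sum_{i\geq 1}ix^i=\frac{x}{(1-x)^2}$, the convolution above is exactly the coefficient of $x^n$ in $\bigl(\tfrac{x}{(1-x)^2}\bigr)^{k}=x^k(1-x)^{-2k}$. Extracting the coefficient, $c_1(n,k)=[x^{n-k}](1-x)^{-2k}=\binom{(n-k)+2k-1}{2k-1}=\binom{n+k-1}{2k-1}$, which is the claimed formula.

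For the second identity, I would simply invoke Proposition \ref{vv}, i.e.\ Equation (\ref{e3}), which states
\[
c_m(n,k)=\sum_{i=k}^{n}(m-1)^{i-k}\binom{i-1}{k-1}c_1(n,i).
\]
Substituting the formula for $c_1(n,i)$ obtained in the first step yields immediately
\[
c_m(n,k)=\sum_{i=k}^{n}(m-1)^{i-k}\binom{i-1}{k-1}\binom{n+i-1}{2i-1},
\]
as required.

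I do not expect a serious obstacle here: the whole proof is assembly. The only step requiring minor care is the generating function manipulation (recognising $\sum_{i\geq 1}ix^i=x/(1-x)^2$ and correctly identifying the coefficient of $x^{n-k}$ in $(1-x)^{-2k}$ as $\binom{n+k-1}{2k-1}$). A purely combinatorial alternative would be to use the bijection with compositions together with the preceding corollary and count directly the words of length $n-1$ over $\{0,1\}$ avoiding $01$ with prescribed numbers of ones, but the generating function argument is shorter and does not rely on the combinatorial interpretation.
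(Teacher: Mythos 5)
Your proposal is correct, but the first half takes a genuinely different route from the paper. For the identity $c_1(n,k)=\binom{n+k-1}{2k-1}$, the paper argues by induction on $k$: it verifies $c_1(n,1)=n$ directly from the recurrence (\ref{e2}), reduces the inductive step to the binomial identity $\binom{n+k-1}{2k-1}=\sum_{i=1}^{n-k+1}i\binom{n+k-i-2}{2k-3}$, and then proves that identity by rewriting the weighted sum as a telescoping collection of tail sums and applying the horizontal (hockey-stick) recurrence twice. You instead start from the convolution formula (\ref{eger}), recognize $c_1(n,k)$ as $[x^n]\bigl(\tfrac{x}{(1-x)^2}\bigr)^k$, and read off the coefficient $\binom{n+k-1}{2k-1}$ from the negative binomial expansion; your coefficient extraction checks out. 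The generating-function argument is shorter and avoids the somewhat delicate double application of the hockey-stick identity, at the cost of stepping outside the purely combinatorial/recursive toolkit the paper uses throughout; the paper's inductive argument stays elementary and yields the auxiliary binomial identity as a byproduct. For the second formula both you and the paper simply substitute the value of $c_1(n,i)$ into Equation (\ref{e3}), so that part is identical.
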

\begin{proof} For the first part, we use induction on $k$. For $k=1$, we have $c_1(n,1)=\sum_{i=1}^ni c_1(n-i,0)=n$, since
$c_1(n-i,0)=1$ when $n=i$, and $c_1(n-i,0)=0$ otherwise. This means that the statement is true for $k=1$. Using induction reduces the problem to the following identity:
\[{n+k-1\choose 2k-1}=\sum_{i=1}^{n-k+1}i\cdot{n+k-i-2\choose 2k-3},(k>1).\]
We let $X$ denote the right-hand side of this equation. It follows that
\[X=\sum_{i=1}^{n-k+1}{n+k-i-2\choose 2k-3}+\sum_{i=2}^{n-k+1}{n+k-i-2\choose 2k-3}
+\cdots+\sum_{i=n-k+1}^{n-k+1}{n+k-i-2\choose 2k-3}.\]
Applying the horizontal recurrence for the binomial coefficients on each summand yields
\[X={n+k-2\choose 2k-2}+{n+k-3\choose 2k-2}+{2k-2\choose 2k-2}.\]
Using the horizontal recurrence once more proves the statement.

The formula is true according to Equation (\ref{e3}).
\end{proof}
Since ${n+k-1\choose 2k-1}$ obviously equals the number of binary words of length $n+k-1$ with $2k-1$ zeros, we obtain the following Euler type identity:
\begin{identity}
The number of binary words of length $n+k-1$ with $2k-1$ zeros equals
the number of ternary words of length $n-1$, having $k-1$ letters equal to $2$ and avoiding $01$.
\end{identity}
%The  arrays \seqnum{A125662},  \seqnum{A207823}, and \seqnum{A207824} in Sloane~\cite{slo} %are related to  Example \ref{ex3}.
\end{example}
The final two examples concern the case $f_0(1)=0$. Note that in these cases, Proposition \ref{hr} can not be used. The first example is an extension of the result in the author~\cite[Proposition 13]{ja2}.
\begin{example}\label{ex4}
  We define $f_0(1)=0$, and $f_0(n)=1$ otherwise.
  It follows from the author~\cite[Corollary 24]{ja1} that, for $n>3$,  $f_m(n)$ equals the number of words of length $n-3$ over $\{0,1,\ldots,m\}$, where no two consecutive letters are nonzero. From the author~\cite[Proposition 13]{ja2}, we obtain $c_1(n,k)={n-k-1\choose k-1}$ for $\left(1\leq k\leq \left\lfloor\frac{n}{2}\right\rfloor\right)$, and $c_1(n,k)=0$ otherwise. Equation (\ref{cmk}) implies that $c_m(n,k)=0$ when $k> \left\lfloor\frac{n}{2}\right\rfloor$.
  \begin{corollary}
For $n>3$ and $1\leq k\leq \left\lfloor\frac{n}{2}\right\rfloor$, the number  $c_m(n,k)$ equals the number of words of length $n-3$ over $\{0,1,\ldots,m\}$ with $k-1$ ones, and where all nonzero letters are isolated.
An explicit formula for $c_m(n,k)$ is
\[c_m(n,k)=\sum_{j=0}^{\left\lfloor\frac{n}{2}\right\rfloor-k}(m-1)^{j}{j+k-1\choose
k-1}{n-k-j-1\choose k+j-1},\left(1\leq k\leq \left\lfloor\frac{n}{2}\right\rfloor\right),\]
and $c_m(n,k)=0$ when $k>\lfloor\frac{n}{2}\rfloor$.
\end{corollary}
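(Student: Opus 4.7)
My plan splits the corollary into two tasks: establishing the closed-form expression for $c_m(n,k)$, and giving the combinatorial interpretation in terms of restricted words. I begin with the formula, which is essentially a routine substitution. Proposition~\ref{vv} already writes $c_m(n,k)$ as a linear combination of the $c_1(n,i)$, and the author's earlier paper provides the value $c_1(n,i) = \binom{n-i-1}{i-1}$ (with $c_1(n,i) = 0$ for $i > \lfloor n/2\rfloor$). Plugging this into Equation~(\ref{e3}) and performing the change of summation variable $j = i-k$ yields the claimed sum, with the upper limit $j \le \lfloor n/2 \rfloor - k$ being forced by the vanishing of $c_1(n,i)$. The side assertion $c_m(n,k)=0$ for $k > \lfloor n/2\rfloor$ is already noted just above the corollary via Equation~(\ref{cmk}).

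For the combinatorial interpretation, the natural route through Proposition~\ref{hr} is blocked since $f_0(1)=0$, so I would enumerate directly and recognise the outcome as the formula just proved. Let $W$ denote the set of words of length $n-3$ over $\{0,1,\ldots,m\}$ with exactly $k-1$ letters equal to $m$ and all nonzero letters isolated, and condition on the total number $k-1+j$ of nonzero entries. For each value of $j$, the nonzero positions form a non-adjacent subset of the $n-3$ available slots, giving $\binom{n-k-j-1}{k+j-1}$ placements by the standard gap-counting identity (that is, $r$ non-adjacent positions in a row of length $\ell$ may be chosen in $\binom{\ell-r+1}{r}$ ways). Next, $\binom{j+k-1}{k-1}$ choices decide which of these positions carry the distinguished letter $m$, and $(m-1)^{j}$ choices fill the remaining $j$ positions with letters from $\{1,\ldots,m-1\}$. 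Summing the product over $j$ from $0$ to $\lfloor n/2 \rfloor - k$ reproduces precisely the explicit formula, so $|W| = c_m(n,k)$.

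The only delicate point is the validity of the non-adjacent placement count in small boundary regimes, but this is absorbed into the range constraint: for $k > \lfloor n/2 \rfloor$ the set $W$ is empty on cardinality grounds and the sum is empty as well, while for $k \le \lfloor n/2 \rfloor$ all binomials appearing in the sum are non-negative integers. Hence the argument is self-contained once Equation~(\ref{e3}) and the cited evaluation of $c_1(n,i)$ are in hand.
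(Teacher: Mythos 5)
Your proposal is correct, and the two halves of it relate to the paper's proof differently. The formula half is essentially identical to the paper's: both substitute $c_1(n,i)=\binom{n-i-1}{i-1}$ (cited from the earlier paper) into Equation (\ref{e3}) and reindex by $j=i-k$, with the vanishing of $c_1(n,i)$ for $i>\lfloor n/2\rfloor$ truncating the sum and settling the case $k>\lfloor n/2\rfloor$. The combinatorial half, however, takes a genuinely different route. The paper proves the word interpretation by induction on $m$ using Equation (\ref{ekf}): the base case $m=1$ comes from the bijection (\ref{kor}) between compositions of $n$ into parts greater than $1$ and binary words of length $n-3$ with $k-1$ isolated ones, and the inductive step interprets the weight $\binom{k+t-1}{t}$ as choosing which $t$ of the $k+t-1$ ones in a word over $\{0,\ldots,m-1\}$ to relabel as the new letter. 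You instead enumerate the word set directly, conditioning on the total number $k+j-1$ of nonzero letters, using the standard count $\binom{\ell-r+1}{r}$ of non-adjacent placements, and matching the result term by term against the formula already derived from (\ref{e3}); this is a clean and fully correct argument, and your boundary analysis (the empty set and empty sum when $k>\lfloor n/2\rfloor$, and the agreement of the upper limit $\lfloor n/2\rfloor-k$ with the maximal number of non-adjacent positions in $n-3$ slots) checks out. The trade-off is that your interpretation is logically downstream of the explicit formula, whereas the paper's induction establishes it independently and explains the binomial weights structurally. One cosmetic remark: the statement distinguishes the letter $1$ (``$k-1$ ones'') while you distinguish the letter $m$; since the isolation condition is symmetric in the nonzero letters, the transposition $1\leftrightarrow m$ is a bijection between the two word sets, but a sentence saying so would make the match with the stated corollary explicit.
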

\begin{proof}
We know that $c_1(n,k)$ equals the number of compositions of $n$ into $k$ parts, each of which is greater than $1$. Using the bijection (\ref{kor}), we conclude that, for $n>3$, $c_1(n,k)$ equals the number of binary words of length $n$
 beginning with $10$, ending with $0$ and where all ones are isolated. Omitting $10$ at the beginning, and $0$ at the end of each word, we conclude that $c_1(n,k),(n>3)$ equals the number of binary words of length $n-3$ with $k-1$ ones all of which  are isolated.
This means that the statement is true for $m=1$. Assume that it is true for $m-1$.
 In Equation (\ref{ekf}),
by the induction hypothesis, $c_{m-1}(n,k+t)$ equals the number of words of length $n-3$ with $k+t-1$ ones, where all nonzero letters are isolated. Replacing $t$ ones with $m$'s, we obtain the desired words. The number $t$ may be chosen in ${k+t-1\choose t}$ ways. Hence,  the right-hand side of Equation (\ref{ekf}) counts all the desired words.

 The formula follows from Equation (\ref{e3}) and the fact that $n-k-j-1\geq k+j-1$.
\end{proof}
%The  arrays \seqnum{A037027},  \seqnum{A249139}, and \seqnum{A006130} in Sloane~\cite{slo} %are related to  Example \ref{ex4}.
\end{example}
\begin{example} Define $f_0$ in the following way: $f_0(2)=f_0(3)=1$, and $f_0(n)=0$
otherwise. The author in the author~\cite[Proposition 5]{ja2} proved  that
$c_1(n,k)={k\choose n-2k},\left( \left\lceil\frac{n}{3}\right\rceil\leq k\leq \left\lfloor\frac{n}{2}\right\rfloor\right)$
and $c_1(n,k)=0$ otherwise.

We know that $c_1(n,k)$ equals the number of compositions of $n$ into $k$ parts equal to
either $2$ or $3$.
In other words, for $n\geq 3$ and  $1\leq k\leq \left\lfloor\frac{n}{2}\right\rfloor$,  $c_1(n,k)$ equals the number of binary words of length $n-1$ with $k-1$  ones, which have the following properties:
        a word begins with $0$ and ends with $0$. Also, zero avoids a run of length greater than $2$, and all ones are isolated.
\begin{corollary} For $n>3$, the number $c_m(n,k)$ equals the number of words of length $n-1$ over the alphabet $\{0,1,\ldots,m\}$ with $k-1$ ones, which begin and end with $0$. Also, $0$ avoids  a run of length greater than $2$ and all nonzero letters are isolated.
\end{corollary}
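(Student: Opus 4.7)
The plan is to mimic the induction carried out in Example \ref{ex4}, since Proposition \ref{hr} is again unavailable here due to $f_0(1)=0$. The base case $m=1$ has already been established in the discussion immediately preceding the statement, via the bijection from compositions of $n$ whose parts are all equal to $2$ or $3$, so the whole burden lies in the inductive step.

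For the inductive step, I would assume the claim holds for $m-1$ and invoke Equation (\ref{ekf}):
\[c_m(n,k)=\sum_{t=0}^{n-k}{k+t-1\choose t}c_{m-1}(n,k+t).\]
By the induction hypothesis, $c_{m-1}(n,k+t)$ counts the words of length $n-1$ over $\{0,1,\ldots,m-1\}$ with exactly $k+t-1$ ones, beginning and ending with $0$, in which $0$ never appears in a run of length greater than $2$ and all nonzero letters are isolated.

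Given such a word, I would choose $t$ of its $k+t-1$ ones (in ${k+t-1\choose t}$ ways) and relabel those positions with the new letter $m$, producing a word of length $n-1$ over $\{0,1,\ldots,m\}$ with $k-1$ ones and $t$ copies of $m$. Conversely, any target word with exactly $t$ copies of $m$ arises uniquely this way: replace every $m$ by $1$ to recover a word counted by $c_{m-1}(n,k+t)$, with the $t$ distinguished positions encoding the choice of relabeling. Summing over $t$ partitions the target words by their number of $m$'s and gives the claim.

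The main obstacle is verifying that each defining restriction is preserved by the $1 \leftrightarrow m$ relabeling. This is routine because every restriction is phrased solely in terms of zeros versus nonzeros, and relabeling one kind of nonzero letter as another does not change which positions are zero. In particular, the isolation of nonzero letters, the bound of $2$ on runs of zeros, and the boundary conditions that the word starts and ends in $0$ are all automatically inherited from the inductive hypothesis.
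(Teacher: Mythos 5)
Your proposal is correct and follows essentially the same route as the paper: induction on $m$ with base case $m=1$ from the preceding bijection, then the inductive step via Equation (\ref{ekf}), interpreting the factor ${k+t-1\choose t}$ as the choice of $t$ ones to relabel as $m$. Your added care in checking that the bijection is well defined and that the restrictions (which depend only on the zero/nonzero pattern) are preserved is a welcome elaboration of what the paper leaves implicit, but it is not a different argument.
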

\begin{proof}
We showed that the statement is true for $m=1$.
Assume that the statement holds for $m-1$. Consider the term
${k+t-1\choose t}c_{m-1}(n,k+t)$ in Equation (\ref{ekf}). The number  $c_{m-1}(n,k+t)$ equals the number of the desired words of length $n-1$ over $\{0,1,\ldots,m-1\}$ with $k+t-1$ ones. We replace $t$ of  $k+t-1$ ones with $m$ and then sum over $t$ to obtain $c_m(n,k)$.
\end{proof}
From Equation (\ref{cmk}), it follows that $c_m(n,k)=0$ if $k>\lfloor\frac{n}{2}\rfloor$.
 Otherwise, from Equation (\ref{e3}), we obtain
\[c_m(n,k)=\sum_{j=0}^{\lfloor\frac{n}{2}\rfloor}(m-1)^j{j+k-1\choose k-1}{k+j\choose n-2k-2j},\left(1\leq k\leq\left\lfloor\frac{n}{2}\right\rfloor\right).\]
\end{example}


\begin{thebibliography}{9}
\bibitem{bir1} D. Birmajer, J. B. Gil and M. D. Weiner, Linear recurrence sequeences and their convolutions via Bell polynomials, {\it J. Integer Seq.} {\bf 18} (2015),

\bibitem{bir2} D. Birmajer, J. B. Gil and M. D. Weiner, On the enumeration of restricted words over a finite alphabet {\it J.    Integer Seq.} {\bf 19} (2016),
    \bibitem{eg} S. Eger, Restricted weighted integer compositions
and extended binomial coefficients, {\it J. Integer Seq.} {\bf 16} (2013),

\bibitem{ja1} M. Janji\'c, On linear recurrence equation arising from compositions of positive integers {\it J. Integer Seq.} {\bf 18} (2015),

\bibitem{ja2} M. Janji\'c, Binomial coefficients and enumeration of restricted words,
{\it J. Integer Seq.} {\bf 16} (2016),
\bibitem{slo}  N. J. A. Sloane, The On-Line Encyclopedia of Integer Sequences,
\end{thebibliography}
\end{document}